\theoremstyle{plain}
\newtheorem{theorem}{Theorem}[section]
\newtheorem{lemma}[theorem]{Lemma}
\newtheorem{proposition}[theorem]{Proposition}
\theoremstyle{definition}
\begin{document}
\title{On the $3$-adic valuation of the Narayana numbers}
\author{Russelle Guadalupe}
\address{Institute of Mathematics, University of the Philippines-Diliman\\
	Quezon City 1101, Philippines}
\email{rguadalupe@math.upd.edu.ph}
\date{}
\maketitle

\begin{abstract}
	We define the Narayana sequence $\{a_n\}_{n\geq 0}$ as the one satisfying the linear recurrence relation $a_n = a_{n-1}+a_{n-3}$ for $n\geq 3$, with initial values $a_0 = 0$ and $a_1 = a_2=1$. In this paper, we fully characterize the $3$-adic valuation of ${a_n}$ and use this to determine all Narayana numbers that are factorials.  
\end{abstract}

\section{Introduction}

The arithmetic properties of the linear recurrence sequences are one of the topics in number theory that are extensively studied. One example concerns about the Fibonacci sequence $\{F_n\}_{n\geq 0}$ defined by $F_n = F_{n-1}+F_{n-2}$ for $n\geq 2$, with initial values $F_0=0$ and $F_1=1$. The first few terms of this sequence are
\[0,1,1,2,3,5,8,13,21,34,55,89,144,233,377,610,987,\ldots\]
For a prime $p$, the $p$-adic valuation (or the $p$-adic order) $\nu_p(r)$ is the exponent of the highest power of $p$ which divides $r$. The $p$-adic valuation of the Fibonacci numbers was fully characterized (see \cite{halton, lengyel, lengyel2, robinson, vinson}). In particular, Lengyel \cite{lengyel} showed that for $n\geq 1$,
\[\nu_2(F_n)=\begin{cases}
0, &\text{ if }n\equiv 1,2\pmod 3;\\
1, &\text{ if }n\equiv 3\pmod 6;\\
3, &\text{ if }n\equiv 6\pmod {12};\\
\nu_2(n)+2, &\text{ if }n\equiv 0\pmod {12}\\
\end{cases}\]
using congruence properties involving $F_n$ (see \cite{jacobson, nied}). However, the behavior of the $p$-adic valuation of linear recurrence sequences of higher order is much less known. A particular case involves a well-known generalization of the Fibonacci numbers, the Tribonacci sequence $\{T_n\}_{n\geq 0}$ defined by $T_n = T_{n-1}+T_{n-2}+T_{n-3}$ for $n\geq 3$, with initial values $T_0=0$ and $T_1=T_2=1$. The first few terms of this sequence are
\[0,1,1,2,4,7,13,24,44,81,149,274,504,927,1705,\ldots\] 
In 2014, Marques and Lengyel \cite{marleng} proved that for $n\geq 1$,
\[\nu_2(T_n)=\begin{cases}
0, &\text{ if }n\equiv 1,2\pmod 4;\\
1, &\text{ if }n\equiv 3,11\pmod {16};\\
2, &\text{ if }n\equiv 4,8\pmod {16};\\
3, &\text{ if }n\equiv 7\pmod {16};\\
\nu_2(n)-1, &\text{ if }n\equiv 0\pmod {16};\\
\nu_2(n+4)-1, &\text{ if }n\equiv 12\pmod {16};\\
\nu_2((n+1)(n+17))-3, &\text{ if }n\equiv 15\pmod {16}
\end{cases}\]
and used the $2$-adic valuation of $T_n$ to show that $1,2$ and $24$ are the only Tribonacci numbers that are factorials. Since then, several authors have worked on the $2$-adic valuation of the generalized Fibonacci numbers (see \cite{buntonien, marleng2, sobol, young}). Another example is about the Tripell sequence $\{t_n\}_{n\geq 0}$ defined by $t_n = 2t_{n-1}+t_{n-2}+t_{n-3}$ for $n\geq 3$, with initial values $t_0=0, t_1=1$ and $t_2=2$. The first few terms of this sequence are
\[0,1,2,5,13,33,84,214,545,1388,3535,9003,22929,58396,\ldots\]
In 2020, Bravo, D\'{i}az, and Ram\'{i}rez \cite{bravo} proved that for $n\geq 1$,
\[\nu_3(t_n)=\begin{cases}
0, &\text{ if }n\equiv 1,2,3,4\pmod 6;\\
\nu_3(n), &\text{ if }n\equiv 0\pmod 6;\\
\nu_3(n+1), &\text{ if }n\equiv 5\pmod 6
\end{cases}\]
and applied the $3$-adic valuation of $t_n$ to show that $1$ and $2$ are the only Tripell numbers that are factorials.\\
\\
Recall that the Narayana sequence $\{a_n\}_{n\geq 0}$ is defined by $a_n = a_{n-1}+a_{n-3}$ for $n\geq 3$, with initial values $a_0 = 0$ and $a_1 = a_2=1$. The first few terms of this sequence are
\[0, 1, 1, 1, 2, 3, 4, 6, 9, 13, 19, 28, 41, 60, 88, 129,189,277,\ldots\]
Some properties of Narayana numbers and its generalizations can be found in \cite{allouche, ballot, bilgici}. \\
\\
In this paper, we apply Zhou's \cite{zhou} method of constructing identities of linear recurrence sequences to deduce several congruence properties involving $a_n$ and to prove our main result, which fully describes the $3$-adic valuation of $a_n$.
\begin{theorem}
	\label{th:thm1}
	For integers $n\geq 1$, we have
	\[\nu_3(a_n) = \begin{cases}
	0, &\text{ if }n\equiv 1,2,3,4,6\pmod 8;\\
	1, &\text{ if }n\equiv 5,7,13,15\pmod {24};\\
	2, &\text{ if }n\equiv 8\pmod {24};\\
	\nu_3(n+1)+1, &\text{ if }n\equiv 23\pmod {24};\\
	\nu_3(n+3)+1, &\text{ if }n\equiv 21\pmod {24};\\
	\nu_3(n)+2, &\text{ if }n\equiv 0\pmod {24};\\
	\nu_3(n+8)+2, &\text{ if }n\equiv 16\pmod {24}.\\
	\end{cases}\] 
\end{theorem}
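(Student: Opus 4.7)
My plan is to split the seven cases into those where $\nu_3(a_n)$ is constant on the indicated residue class (the first three cases) and those where it grows with the $3$-adic valuation of a linear function of $n$ (the last four), and to handle the two groups by different methods.

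For the constant cases I would argue by direct verification. A short computation shows that $(a_n \bmod 3)$ is periodic with period $8$, $(a_n \bmod 9)$ with period dividing $24$, and $(a_n \bmod 27)$ with period dividing $72$. Consequently the assertions $\nu_3(a_n)=0$ for $n\equiv 1,2,3,4,6\pmod 8$, $\nu_3(a_n)=1$ for $n\equiv 5,7,13,15\pmod{24}$, and $\nu_3(a_n)=2$ for $n\equiv 8\pmod{24}$ all reduce to reading off a finite table of residues. The only genuine input here is that these periods really do divide $72$, which itself follows from the shift identities introduced below.

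For the four non-constant classes $n\equiv 0,16,21,23\pmod{24}$, I would use the identity-based technique of Zhou cited in the introduction. Because the Narayana recurrence has order three, for every $m\geq 0$ there exist integer coefficients $\alpha_m,\beta_m,\gamma_m$, themselves satisfying the Narayana recurrence in $m$, such that
\[
a_{n+m} = \alpha_m a_n + \beta_m a_{n-1} + \gamma_m a_{n-2} \qquad (n\geq 2).
\]
The heart of the argument is then to take $m=24\cdot 3^s$ and establish an incremental congruence of the form
\[
a_{n+24\cdot 3^s} \equiv a_n + 3^{s+3}\, h_s(n) \pmod{3^{s+4}},
\]
with $h_s(n)$ a $3$-adic unit in each of the four non-constant residue classes. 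Once such an identity is in hand, an induction on $s$, beginning from base cases verified directly modulo $81$, produces the claimed formulas: for example, if $n=24k$ and we write $s=\nu_3(k)$, iterating gives $\nu_3(a_{24k})=\nu_3(k)+3=\nu_3(n)+2$, and the three remaining classes follow from the same scheme with shifts by $16$, $21$, $23$.

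The main obstacle I anticipate is the fine control of the coefficients $\alpha_{24\cdot 3^s},\beta_{24\cdot 3^s},\gamma_{24\cdot 3^s}$ modulo $3^{s+4}$: for each of the four classes one has to show that the particular combination of them that governs $a_{n+24\cdot 3^s}-a_n$ has $3$-adic valuation exactly $s+3$ and not larger. This ultimately reduces to a finite, but delicate, computation modulo $81$, with a separate verification for each of the four residue classes to pin down $h_s(n)$ as a unit; once those base cases are in place the induction on $s$ propagates uniformly, and the theorem follows by assembling the seven cases.
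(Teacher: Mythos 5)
Your overall framework---Zhou-type shift identities, base cases verified modulo $81$, induction on the power of $3$---is the same as the paper's (its \Cref{le:lem3} and \Cref{pro:prop1}), and your treatment of the constant cases is fine; indeed, disposing of $n\equiv 8\pmod{24}$ by the period-$72$ table modulo $27$ is a legitimate shortcut compared with the paper, which derives that case from \Cref{pro:prop1}. The problem is that the central congruence you propose to induct on is false for half of the non-constant classes. For $n\equiv 21,23\pmod{24}$ the increment $a_{n+24\cdot 3^s}-a_n$ has $3$-adic valuation $s+2$, not $s+3$: for instance $a_{23}=2745\equiv -9\pmod{81}$ and $a_{47}=26467299\equiv -18\pmod{81}$, so $a_{47}-a_{23}\equiv -9\pmod{81}$ has valuation exactly $2$, whereas your claim with $s=0$ requires valuation $3$. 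This is not a cosmetic slip: the split between increment valuations $s+2$ (classes $21,23$) and $s+3$ (classes $0,16$) is precisely what produces the ``$+1$'' versus ``$+2$'' in the statement of the theorem; with a uniform exponent $s+3$ your scheme would output $\nu_3(n+1)+2$ and $\nu_3(n+3)+2$ for $n\equiv 23,21\pmod{24}$, contradicting both the theorem and the data. So the ``delicate computation modulo $81$'' you defer would not merely pin down the units $h_s$; it would show that the congruence you plan to establish cannot hold as written.

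There is a second, smaller gap even in the classes where the exponent is right: knowing only that each increment equals a unit times $3^{s+3}$ modulo $3^{s+4}$ does not justify ``iterating,'' since a sum of $t$ terms each of exact valuation $s+3$ can have larger valuation unless their leading units agree modulo $3$. What the induction really needs is a congruence linear in the step count, which is exactly what \Cref{pro:prop1} provides: $a_{8t\cdot 3^{m}}\equiv 2t\cdot 3^{m+2}$, $a_{8t\cdot 3^{m}+1}\equiv 1+t\cdot 3^{m+1}+2t\cdot 3^{m+2}$, $a_{8t\cdot 3^{m}+2}\equiv 1+2t\cdot 3^{m+2}\pmod{3^{m+3}}$. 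The paper then reaches the classes $16,21,23\pmod{24}$ not by separate incremental congruences but by running the recurrence a few steps backwards from the index $8t\cdot 3^{m}$ (i.e.\ $n=8t\cdot 3^m-8,-3,-1$). If you repair your key lemma accordingly---either class-dependent exponents ($s+2$ for $21,23$; $s+3$ for $0,16$) with increments of controlled leading coefficient, or exact congruences at the indices $8t\cdot 3^m+j$, $j\in\{-2,-1,0,1,2\}$---your plan goes through, but at that point it is essentially the published proof.
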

One application of the $p$-adic valuation of linear recurrence sequences is to give upper bounds of the solutions of the Diophantine equations involving factorials and these sequences. In this paper, we use \Cref{th:thm1} to determine all Narayana numbers that are factorials, as shown by the following result.
\begin{theorem}
	\label{th:thm2}
	The only solutions to the Diophantine equation $a_n = m!$ in positive integers $n$ and $m$ are
	$(n,m)\in \{(1,1),(2,1),(3,1),(4,2),(7,3)\}$.
\end{theorem}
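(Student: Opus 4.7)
My plan is the standard one for Diophantine equations involving factorials and linear recurrence sequences: exploit the mismatch between the linear growth of $\nu_3(m!)$ in $m$ and the logarithmic growth of $\nu_3(a_n)$ in $n$, after using the growth rate of $a_n$ to pin down $n$ in terms of $m$.

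First, I would verify the five listed solutions by computing $a_n$ for $n$ up to some small bound and checking which values are factorials. Then I would record the asymptotics of the Narayana sequence: the characteristic polynomial $x^3 - x^2 - 1$ has one dominant real root $\alpha \approx 1.3247$ (the plastic number) and two complex conjugate roots of modulus strictly less than $1$, so a Binet-type expansion yields explicit constants $0 < c_1 \leq c_2$ with $c_1 \alpha^n \leq a_n \leq c_2 \alpha^n$ for all sufficiently large $n$. Combined with Stirling's formula, the equation $a_n = m!$ forces $n = \tfrac{m \log m - m}{\log \alpha} + O(\log m)$, and in particular $n \leq C m \log m$ for an absolute constant $C$.

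Next, I would extract from \Cref{th:thm1} the uniform bound
\[\nu_3(a_n) \leq \log_3(n + 8) + 2,\]
valid for all $n \geq 1$, since in each of the seven cases of the theorem $\nu_3(a_n)$ is either a constant at most $2$ or of the form $\nu_3(L(n)) + c$ with $L(n) \in \{n, n+1, n+3, n+8\}$ and $c \in \{1,2\}$. On the other hand, Legendre's formula gives $\nu_3(m!) = \tfrac{m - s_3(m)}{2} \geq \tfrac{m-1}{2} - \log_3 m$. Combining these with the bound on $n$ from the previous paragraph, the equation $\nu_3(a_n) = \nu_3(m!)$ yields
\[\frac{m-1}{2} - \log_3 m \leq \log_3(Cm \log m + 8) + 2,\]
which fails for $m$ beyond an explicit threshold $M_0$.

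Finally, for each remaining $m$ in the finite range $\{4, \ldots, M_0\}$ (the cases $m \in \{1,2,3\}$ being resolved by the initial computation), I would use \Cref{th:thm1} to restrict the admissible residues of $n$ modulo $24$ from the exact value of $\nu_3(m!)$, and then test the $O(1)$ candidate indices $n$ near $\log_\alpha(m!)$ against $m!$ directly. The main obstacle is this finite-range verification: for $m$ close to $M_0$ the candidate $n$ can be of moderate size, so one must pair the $24$-periodic residue constraint with the exponential growth bound to keep the check short. The analytic half of the argument is routine; the work is concentrated in this concluding case analysis.
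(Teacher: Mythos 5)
Your proposal is correct and follows essentially the same route as the paper: combine the logarithmic-in-$n$ upper bound on $\nu_3(a_n)$ extracted from \Cref{th:thm1} with Legendre's linear-in-$m$ lower bound on $\nu_3(m!)$, use the exponential growth of $a_n$ (paper's \Cref{le:lem2}) together with $m!<(m/2)^m$ to bound $n$ by roughly $m\log m$, deduce an explicit threshold on $m$, and finish with a finite computational check. One small correction: the real root of $x^3-x^2-1$ is $\alpha\approx 1.4656$ (the supergolden ratio), not the plastic number $\approx 1.3247$ (which is the root of $x^3-x-1$); this slip only affects the explicit constants, not the structure of the argument.
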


\section{Preliminaries}
In this section, we present some preliminary results that are used in the proofs of our main theorems. We begin with the bounds of the $p$-adic valuation of a factorial, which is a consequence of Legendre's formula (see \cite{legendre}).

\begin{lemma}
\label{le:lem1}
For any integer $m\geq 1$ and prime $p$, we have
\[\dfrac{m}{p-1}-\left\lfloor \dfrac{\log m}{\log p}\right\rfloor-1\leq \nu_p(m!)\leq \dfrac{m-1}{p-1},\]
where $\lfloor x\rfloor$ denotes the largest integer less than or equal to $x$.
\end{lemma}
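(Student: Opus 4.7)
The plan is to derive both bounds from Legendre's formula in its digit-sum form, namely
\[
\nu_p(m!) = \frac{m - s_p(m)}{p-1},
\]
where $s_p(m)$ denotes the sum of the base-$p$ digits of $m$. This identity follows by writing $\nu_p(m!) = \sum_{i\geq 1}\lfloor m/p^i\rfloor$ and expressing each floor in terms of the base-$p$ expansion of $m$, so I would either quote it or sketch it in one line.

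For the upper bound, since $m\geq 1$ we have $s_p(m)\geq 1$, and hence
\[
\nu_p(m!) = \frac{m - s_p(m)}{p-1} \leq \frac{m-1}{p-1}.
\]
For the lower bound, I would note that $m$ has exactly $\lfloor \log m/\log p\rfloor + 1$ digits in base $p$, and each digit is at most $p-1$, giving $s_p(m) \leq (p-1)\bigl(\lfloor \log m/\log p\rfloor + 1\bigr)$. Substituting this into the identity yields
\[
\nu_p(m!) \geq \frac{m}{p-1} - \left\lfloor \frac{\log m}{\log p}\right\rfloor - 1,
\]
as claimed.

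There is no real obstacle here: the whole argument reduces to the digit-sum form of Legendre and an elementary count of base-$p$ digits. The only minor point to double-check is the edge case $m=1$ (and small $m$ in general), where both inequalities remain consistent since the lower bound is then non-positive while $\nu_p(m!) = 0$.
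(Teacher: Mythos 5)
Your proof is correct and complete: the digit-sum form of Legendre's formula gives the upper bound from $s_p(m)\geq 1$ and the lower bound from $s_p(m)\leq (p-1)\left(\left\lfloor \log m/\log p\right\rfloor+1\right)$, and both steps are justified. The paper itself offers no argument here, simply citing Lemma 2.4 of Marques; your derivation is the standard one behind that reference, so in effect you have supplied the self-contained proof the paper omits.
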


\begin{proof}
See \cite[Lem. 2.4]{marques}.
\end{proof}

We next have the exponential growth of the Narayana sequence $\{a_n\}$.

\begin{lemma}
\label{le:lem2}	
For all integers $n\geq 1$, we have $\alpha^{n-3}\leq a_n\leq \alpha^{n-1}$, where $\alpha > 1$ is the real root of the characteristic
polynomial $f(x) :=x^3-x^2-1$ given by
\[\alpha = \dfrac{1}{3}\left(1+\sqrt[3]{\dfrac{29-3\sqrt{93}}{2}}+\sqrt[3]{\dfrac{29+3\sqrt{93}}{2}}\right)\approx 1.4656.\]
\end{lemma}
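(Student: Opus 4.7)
The plan is to prove both inequalities by strong induction on $n$, exploiting the characteristic identity $\alpha^3=\alpha^2+1$ to close the induction cleanly. The only ingredients are the three-term recurrence $a_n=a_{n-1}+a_{n-3}$ and the fact that $\alpha$ is a root of $f(x)=x^3-x^2-1$.

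First I would dispose of the base cases $n=1,2,3$ by direct inspection: since $a_1=a_2=a_3=1$ and $\alpha>1$, the inequalities $\alpha^{n-3}\le a_n\le\alpha^{n-1}$ read $\alpha^{-2}\le 1\le 1$, $\alpha^{-1}\le 1\le\alpha$, and $1\le 1\le\alpha^2$, which are clearly true. For the inductive step, assuming the bounds for all indices less than $n$ with $n\ge 4$, the recurrence gives
\[
a_n = a_{n-1}+a_{n-3} \le \alpha^{n-2}+\alpha^{n-4} = \alpha^{n-4}(\alpha^2+1) = \alpha^{n-4}\cdot\alpha^3 = \alpha^{n-1},
\]
and symmetrically
\[
a_n = a_{n-1}+a_{n-3} \ge \alpha^{n-4}+\alpha^{n-6} = \alpha^{n-6}(\alpha^2+1) = \alpha^{n-3}.
\]
The cancellation in both lines is forced by the identity $\alpha^2+1=\alpha^3$, which is exactly the reason $\alpha$ was chosen as the dominant root of the companion polynomial of the recurrence.

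There is no real obstacle here: the only subtle point is making sure the induction is anchored on enough consecutive values to feed the three-term recurrence (hence checking $n=1,2,3$ rather than just one base case), and verifying that the exponents $n-4$ and $n-6$ appearing after one application of the recurrence are still in the inductive range, which they are as soon as $n\ge 4$ (so that $n-3\ge 1$). The explicit closed form of $\alpha$ is not needed for the argument; only $\alpha>1$ and $\alpha^3=\alpha^2+1$ are used.
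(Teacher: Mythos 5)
Your proof is correct and follows essentially the same route as the paper: strong induction using the recurrence together with the identity $\alpha^3=\alpha^2+1$. If anything, your explicit verification of the base cases $n=1,2,3$ is slightly more careful than the paper's, which anchors the induction only at $n=1$ even though the recurrence step first applies cleanly at $n\geq 4$.
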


\begin{proof}
A direct application of the rational roots theorem shows that $f(x)$ is irreducible over $\mathbb{Q}$. Moreover, it has a real root $\alpha > 1$ and two conjugate complex roots lying inside the unit circle. We now use induction on $n$. We note that the statement holds for $n=1$ as $0.6823 < \alpha^{-2}\leq a_1=1 \leq \alpha^0$. We now suppose that $\alpha^{m-3}\leq a_m\leq \alpha^{m-1}$ holds for all integers $1\leq m\leq n-1$. Using the recurrence formula for $a_n$, we see that
\[\alpha^{n-4}+\alpha^{n-6}\leq a_n\leq \alpha^{n-2}+\alpha^{n-4}\implies \alpha^{n-6}(\alpha^2+1)\leq a_n\leq\alpha^{n-4}(\alpha^2+1).\]
Since $\alpha^3 = \alpha^2+1$, we get the desired inequality for all integers $n\geq 1$.
\end{proof}

We finally have the following identity involving $a_n$, which plays a key role in the proofs of \Cref{th:thm1} and \Cref{th:thm2}. We apply the method of constructing identities of linear recurrence sequences introduced by Zhou \cite{zhou} to prove this identity.
\begin{lemma}
\label{le:lem3}
For all integers $m\geq 3$ and $n\geq 0$, we have
\[\begin{aligned}
a_{m+n} &= a_{m-1}a_{n+2}+a_{m-3}a_{n+1} +a_{m-2}a_n\\
&= a_{m-1}a_{n+2}+(a_m-a_{m-1})a_{n+1} +a_{m-2}a_n.
\end{aligned}\]
\end{lemma}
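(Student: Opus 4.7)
The plan is to fix $m\geq 3$ and induct on $n$. The recurrence for $\{a_n\}$ has order three, so I expect to need three base cases $n=0,1,2$; after that the third-order recurrence $a_{k}=a_{k-1}+a_{k-3}$ will carry the induction.

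For the base cases I would simply plug in, using $a_0=0$, $a_1=a_2=1$, $a_3=1$, $a_4=2$ together with the recurrence $a_m = a_{m-1}+a_{m-3}$ (valid since $m\geq 3$). For instance at $n=0$ the right side collapses to $a_{m-1}+a_{m-3}=a_m$; at $n=1$ it becomes $a_{m-1}+a_{m-2}+a_{m-3}$, which matches $a_{m+1}=a_m+a_{m-2}=(a_{m-1}+a_{m-3})+a_{m-2}$; the case $n=2$ is similar.

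For the inductive step, fix $n\geq 3$ and assume the identity holds for all smaller nonnegative values of the second index. Since $m+n\geq 6\geq 3$ I can write $a_{m+n}=a_{m+n-1}+a_{m+n-3}$ and apply the induction hypothesis to each summand. Collecting by the coefficients $a_{m-1}$, $a_{m-3}$, $a_{m-2}$ yields
\[a_{m+n}=a_{m-1}(a_{n+1}+a_{n-1})+a_{m-3}(a_{n}+a_{n-2})+a_{m-2}(a_{n-1}+a_{n-3}).\]
Now the key observation is that the Narayana recurrence, written in the form $a_{k+1}=a_{k}+a_{k-2}$ for $k\geq 2$, gives $a_{n+1}+a_{n-1}=a_{n+2}$, $a_n+a_{n-2}=a_{n+1}$, and $a_{n-1}+a_{n-3}=a_n$ (each of these is valid because $n\geq 3$). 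Substituting these reproduces the right-hand side of the claimed identity, completing the induction and establishing the first equality. The second equality is then a one-line rewrite using $a_{m-3}=a_m-a_{m-1}$.

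I do not anticipate any serious obstacle: the proof is entirely mechanical once one notices that the three re-groupings required in the inductive step are exactly three instances of the defining recurrence in the equivalent form $a_{k+1}=a_k+a_{k-2}$. The only thing worth checking carefully is that all indices appearing in the induction hypothesis are nonnegative, which is guaranteed by the bound $n\geq 3$ used at the inductive step.
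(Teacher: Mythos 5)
Your induction is correct, but it is a genuinely different argument from the one in the paper. You fix $m\geq 3$, verify the three base cases $n=0,1,2$ directly from the initial values and the recurrence, and then push the induction forward with $a_{m+n}=a_{m+n-1}+a_{m+n-3}$, regrouping the three resulting products via the equivalent form $a_{k+1}=a_k+a_{k-2}$; all index bounds you invoke ($n\geq 3$ in the step, $m\geq 3$ for $a_m=a_{m-1}+a_{m-3}$) check out, and the second equality is indeed just the rewrite $a_{m-3}=a_m-a_{m-1}$. The paper instead proves the lemma by Zhou's polynomial method: it multiplies $h(x)=x^{m+n}-x^{m+n-1}-x^{m+n-3}$, which is divisible by the characteristic polynomial $f(x)=x^3-x^2-1$, by the Laurent polynomial $a_1+a_2x^{-1}+\cdots+a_{m-2}x^{-m+3}$, telescopes the product modulo $f(x)$, and then cites Zhou's theorem to read off the convolution identity. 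Your route is more elementary and self-contained — it needs no external theorem and no polynomial bookkeeping — at the cost of having to know the identity in advance and verify three base cases by hand; the paper's route is part of its stated program of illustrating Zhou's machinery, which produces identities of this shape systematically (the same calculation template yields other shift identities without redoing an induction each time). Either proof is acceptable; yours would serve as a valid replacement for the lemma as stated.
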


\begin{proof}
Observe that the identity holds for $m=3$, so suppose $m\geq 4$. Consider the polynomial $h(x) = x^{m+n}-x^{m+n-1}-x^{m+n-3}$, which is divisible by the characteristic polynomial $f(x)$ of the sequence $\{a_n\}_{n\geq 0}$. Then
\[\begin{aligned}
h(x)(a_1 &+a_2x^{-1}+\cdots+a_{m-3}x^{-m+4}+a_{m-2}x^{-m+3})\\
&= a_1x^{m+n}+a_2x^{m+n-1}+a_3x^{m+n-2}+\cdots+a_{m-3}x^{n+4}+a_{m-2}x^{n+3}\\
&- a_1x^{m+n-1}-a_2x^{m+n-2}-a_3x^{m+n-3}-\cdots-a_{m-3}x^{n+3}-a_{m-2}x^{n+2}\\
&- a_1x^{m+n-3}-a_2x^{m+n-4}-a_3x^{m+n-5}-\cdots-a_{m-3}x^{n+1}-a_{m-2}x^n\\
&= a_1x^{m+n}-(a_{m-2}+a_{m-4})x^{n+2}-a_{m-3}x^{n+1}-a_{m-2}x^n\\
&\equiv 0\pmod {f(x)}.
\end{aligned}\]
Thus, in view of \cite[Thm. 2.3]{zhou}, we obtain $a_{m+n} = a_{m-1}a_{n+2}+a_{m-3}a_{n+1}+a_{m-2}a_n$.
\end{proof}

\section{Proof of \Cref{th:thm1}}
We first present the following congruence property of the Narayana numbers. 
\begin{proposition}
\label{pro:prop1}
For all integers $s\geq 1$ and $n\geq 1$, we have
\begin{align}
\label{eq:eq1}
a_{8s\cdot 3^n}  &\equiv 2s\cdot 3^{n+2} & \pmod {3^{n+3}},\nonumber\\
a_{8s\cdot 3^n+1} &\equiv 1+s\cdot 3^{n+1}+2s\cdot 3^{n+2}&\pmod {3^{n+3}},\\
a_{8s\cdot 3^n+2} &\equiv 1+ 2s\cdot 3^{n+2} &\pmod {3^{n+3}}.\nonumber
\end{align}
\end{proposition}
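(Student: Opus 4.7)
The plan is to proceed by induction on $n \geq 1$, taking as $P(n)$ the statement that all three congruences hold for every $s \geq 1$. For the base case $n=1$, I first check $s=1$ by direct computation ($a_{24}=4023 \equiv 54$, $a_{25}=5896 \equiv 64$, $a_{26}=8641 \equiv 55 \pmod{81}$), and then extend to all $s$ by a secondary induction on $s$: the inductive step applies \Cref{le:lem3} with $m=24$ and auxiliary index $24s+j$ to obtain
\[
a_{24(s+1)+j} = a_{23}\, a_{24s+j+2} + a_{21}\, a_{24s+j+1} + a_{22}\, a_{24s+j},
\]
into which one substitutes the constant values $a_{21} \equiv 63$, $a_{22} \equiv 10$, $a_{23} \equiv 72 \pmod{81}$ together with the inductive hypothesis (using the Narayana recurrence to compute $a_{24s+3}$ and $a_{24s+4}$ when $j \in \{1,2\}$); a short manipulation collapses the expression to the target congruence for $s+1$.

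For the inductive step $n \to n+1$, the key is to iterate \Cref{le:lem3} twice to obtain polynomial identities expressing $a_{3k+1}, a_{3k+2}, a_{3k+3}$ as cubic polynomials in $a_k, a_{k+1}, a_{k+2}$. Concretely, one first applies \Cref{le:lem3} with $m = n' = k$ to express each of $a_{2k}, a_{2k+1}, a_{2k+2}$ as a quadratic in $a_{k-3}, \ldots, a_{k+2}$, eliminates the negative indices via the recurrence $a_{k-i} = a_{k+3-i} - a_{k+2-i}$ for $i = 1, 2, 3$, and then applies \Cref{le:lem3} again with $m = 2k+j+1$ and $n' = k$. The resulting identities take the form
\[
a_{3k+3} = a_{k+1}^3 + 3\, Q_3, \qquad a_{3k+2} = -2\,a_{k+1}^3 - a_k^3 + 3\, Q_2, \qquad a_{3k+1} = a_{k+2}^3 + a_k^3 + 3\, Q_1,
\]
where $Q_1, Q_2, Q_3$ are integer-coefficient polynomials in $a_k, a_{k+1}, a_{k+2}$.

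Setting $k = 8s \cdot 3^n$ so that $3k = 8s \cdot 3^{n+1}$, the hypothesis $P(n)$ supplies $a_k, a_{k+1}, a_{k+2}$ modulo $3^{n+3}$. The ``cube plus $3Q$'' structure is precisely what lifts the $3$-adic precision: each cube $a_{k+j}^3$ is determined modulo $3^{n+4}$ from $a_{k+j}$ modulo $3^{n+3}$, since $(a + 3^{n+3} E)^3 = a^3 + 3 a^2 E \cdot 3^{n+3} + O(3^{2n+6}) \equiv a^3 \pmod{3^{n+4}}$; and each $3\, Q_i$ is determined modulo $3^{n+4}$ from its arguments modulo $3^{n+3}$. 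Substitution and simplification (together with $a_{3k} = a_{3k+3} - a_{3k+2}$ for the first congruence) then yield $P(n+1)$.

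The main obstacle will be the derivation of the three polynomial identities together with verifying the combinatorial miracle that every non-cubic monomial in $Q_1, Q_2, Q_3$ carries an integer coefficient divisible by $3$. Without this divisibility property, the mod-$3^{n+3}$ information supplied by $P(n)$ would be insufficient to pin down $a_{3k+j}$ modulo $3^{n+4}$, and the induction on $n$ would fail to propagate.
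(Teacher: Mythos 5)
Your base case is exactly the paper's: check $s=1$ by hand and induct on $s$ via \Cref{le:lem3} with the constants $a_{21}\equiv 63$, $a_{22}\equiv 10$, $a_{23}\equiv 72\equiv -9\pmod{81}$, so no issue there. The inductive step on $n$, however, has a genuine gap: everything that carries the arithmetic content is asserted rather than proved. You claim tripling identities $a_{3k+3}=a_{k+1}^3+3Q_3$, $a_{3k+2}=-2a_{k+1}^3-a_k^3+3Q_2$, $a_{3k+1}=a_{k+2}^3+a_k^3+3Q_1$ with $Q_j\in\mathbb{Z}[a_k,a_{k+1},a_{k+2}]$, but you do not derive them, and you yourself label the divisibility of all non-cube monomials by $3$ a ``combinatorial miracle'' left unverified. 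Moreover, even granting the identities, knowing the cube parts modulo $3$ is not enough to finish: to determine $a_{3k+j}$ modulo $3^{n+4}$ you must evaluate $3Q_j$ modulo $3^{n+4}$, i.e.\ you need $Q_j$ as an \emph{explicit} polynomial so that the inductive hypothesis (arguments known modulo $3^{n+3}$) pins down its value modulo $3^{n+3}$; and then the concluding ``substitution and simplification'' that is supposed to reproduce the stated congruences at level $n+1$ is itself a nontrivial computation you have not performed. As it stands, the step is a plausible plan, not a proof.

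The good news is that your claimed structure is correct and can be supplied. Since $x^k\equiv a_{k-1}x^2+a_{k-3}x+a_{k-2}\pmod{f(x)}$ in $\mathbb{Z}[x]/(x^3-x^2-1)$, cubing and using $(u+v+w)^3=u^3+v^3+w^3+3(\cdots)$ together with $x^3\equiv x^2+1$ and $x^6\equiv x+2\pmod{3,f(x)}$ gives, after comparing coefficients in the basis $\{1,x,x^2\}$, exactly identities of your form with explicit $Q_j$ (and indeed $a_{3k+3}\equiv a_{k+1}^3$, $a_{3k+2}\equiv a_{k+1}^3-a_k^3$, $a_{3k+1}\equiv a_k^3+a_{k+2}^3\pmod 3$, matching your cube parts); converting $a_{k-1},a_{k-2},a_{k-3}$ to $a_k,a_{k+1},a_{k+2}$ by the recurrence preserves the structure. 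Alternatively, the paper sidesteps explicit tripling polynomials altogether: it writes the inductive hypothesis as exact equalities with unknown integers $c_0,c_1,c_2$ multiplying the top power of $3$ (the recurrence then gives $a_{k-1},a_{k-2}$ with $c_2-c_1$ and $c_1-c_0$), applies \Cref{le:lem3} once to reach $2k$ and once more to reach $3k$, and checks that the $c_i$ cancel modulo the next power of $3$ while the claimed main terms emerge --- that cancellation is precisely your ``cube plus $3Q$'' phenomenon, verified on the fly. Either route works, but one of these computations must actually be carried out to close your inductive step.
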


\begin{proof}
We first prove this for $n=1$ using induction on $s$. Note that the statement holds for the base case $s=1$, as $a_{24}\equiv 54\pmod{81}, a_{25}\equiv 64\pmod{81}$ and $a_{26}\equiv 55\pmod{81}$ by routine calculation. Suppose that the congruences \eqref{eq:eq1} hold for $s-1$. Using the recurrence formula for $a_n$, we compute  $a_{21}\equiv 63\pmod{81}, a_{22}\equiv 10\pmod{81}$ and $a_{23}\equiv-9\pmod{81}$. Applying \Cref{le:lem3}, we get
\[\begin{aligned}
a_{24s} &= a_{24+24(s-1)} = a_{23}a_{24(s-1)+2}+a_{21}a_{24(s-1)+1}+a_{22}a_{24(s-1)}\\
&\equiv -9(1+54(s-1))+63(1+63(s-1))+10(54(s-1))\pmod{81}\\
&\equiv -3969+4023s \equiv 54s\pmod{81}.
\end{aligned}\] 
Similarly, we obtain
\[\begin{aligned}
a_{24s+1} &= a_{25+24(s-1)} = a_{24}a_{24(s-1)+2}+a_{22}a_{24(s-1)+1}+a_{23}a_{24(s-1)}\\
&\equiv 54(1+54(s-1))+10(1+63(s-1))-9(54(s-1))\pmod{81}\\
&\equiv -2996+3060s \equiv 1+63s\pmod{81},\\
a_{24s+2} &= a_{26+24(s-1)} = a_{25}a_{24(s-1)+2}+a_{23}a_{24(s-1)+1}+a_{24}a_{24(s-1)}\\
&\equiv 64(1+54(s-1))-9(1+63(s-1))+54(54(s-1))\pmod{81}\\
&\equiv -5750+5805s \equiv 1+54s\pmod{81}.
\end{aligned}\] 
Thus, the congruences \eqref{eq:eq1} are true for all $s\geq 1$ and $n=1$. We now fix $s$ and show that \eqref{eq:eq1} hold using induction on $n$.  Suppose that $n\geq 2$ and the congruences \eqref{eq:eq1} are true for $n-1$. Then $a_{8s\cdot 3^{n-1}-1}\equiv -s\cdot 3^n\pmod{3^{n+2}}$ and $a_{8s\cdot 3^{n-1}-2}\equiv 1+s\cdot 3^n\pmod{3^{n+2}}$, so for some integers $c_0,c_1$ and $c_2$, we have
\[\begin{aligned}
a_{8s\cdot 3^{n-1}-2} &= 1+s\cdot 3^n+(c_1-c_0)\cdot 3^{n+2},\\
a_{8s\cdot 3^{n-1}-1} &= -s\cdot 3^n+(c_2-c_1)\cdot 3^{n+2},\\
a_{8s\cdot 3^{n-1}} &= 2s\cdot 3^{n+1}+c_0\cdot 3^{n+2},\\
a_{8s\cdot 3^{n-1}+1} &= 1+s\cdot 3^n+2s\cdot 3^{n+1}+c_1\cdot 3^{n+2},\\
a_{8s\cdot 3^{n-1}+2} &= 1+2s\cdot 3^{n+1}+c_2\cdot 3^{n+2}.
\end{aligned}\] 
Using \Cref{le:lem3}, we get
\[\begin{aligned}
a_{2(8s\cdot 3^{n-1})} &= a_{(8s\cdot 3^{n-1}+1)+(8s\cdot 3^{n-1}-1)}\\
&= a_{8s\cdot 3^{n-1}}a_{8s\cdot 3^{n-1}+1}+a_{8s\cdot 3^{n-1}-2}a_{8s\cdot 3^{n-1}}+a_{8s\cdot 3^{n-1}-1}a_{8s\cdot 3^{n-1}-1}\\
&\equiv s\cdot 3^{n+1}+(2c_0+s)\cdot 3^{n+2}\pmod {3^{n+3}},\\
a_{2(8s\cdot 3^{n-1})+1} &= a_{(8s\cdot 3^{n-1}+2)+(8s\cdot 3^{n-1}-1)}\\
&= a_{8s\cdot 3^{n-1}+1}a_{8s\cdot 3^{n-1}+1}+a_{8s\cdot 3^{n-1}-1}a_{8s\cdot 3^{n-1}}+a_{8s\cdot 3^{n-1}}a_{8s\cdot 3^{n-1}-1}\\
&\equiv 1+2s\cdot 3^n + s\cdot 3^{n+1}+(2c_1+s)\cdot 3^{n+2}\pmod {3^{n+3}},\\
a_{2(8s\cdot 3^{n-1})+2} &= a_{(8s\cdot 3^{n-1}+2)+(8s\cdot 3^{n-1})}\\
&= a_{8s\cdot 3^{n-1}+1}a_{8s\cdot 3^{n-1}+2}+a_{8s\cdot 3^{n-1}-1}a_{8s\cdot 3^{n-1}+1}+a_{8s\cdot 3^{n-1}}a_{8s\cdot 3^{n-1}}\\
&\equiv 1+s\cdot 3^{n+1}+(2c_2+s)\cdot 3^{n+2}\pmod {3^{n+3}}.
\end{aligned}\] 
Thus, applying \Cref{le:lem3} again, we arrive at
\[\begin{aligned}
a_{8s\cdot 3^n} &= a_{8s\cdot 3^{n-1}+2(8s\cdot 3^{n-1})}\\
&= a_{8s\cdot 3^{n-1}-1}a_{2(8s\cdot 3^{n-1})+2}+(a_{8s\cdot 3^{n-1}}-a_{8s\cdot 3^{n-1}-1})a_{2(8s\cdot 3^{n-1})+1}+a_{8s\cdot 3^{n-1}-2}a_{2(8s\cdot 3^{n-1})}\\
&\equiv -s\cdot 3^n+(c_2-c_1)\cdot 3^{n+2}+2s\cdot 3^{n+1}+s\cdot 3^n+(c_0+c_1-c_2)\cdot 3^{n+2}\\
&\phantom{hellohellohellohellohellohellohellohell}+ s\cdot 3^{n+1}+(2c_0+s)\cdot 3^{n+2}\pmod {3^{n+3}},\\
&\equiv 2s\cdot 3^{n+2}\pmod{3^{n+3}},\\
a_{8s\cdot 3^n+1} &= a_{2(8s\cdot 3^{n-1})+3+(8s\cdot 3^{n-1}-2)}\\
&= a_{2(8s\cdot 3^{n-1})+2}a_{8s\cdot 3^{n-1}}+a_{2(8s\cdot 3^{n-1})}a_{8s\cdot 3^{n-1}-1}+a_{2(8s\cdot 3^{n-1})+1}a_{8s\cdot 3^{n-1}-2}\\
&\equiv (2s\cdot 3^{n+1}+c_0\cdot 3^{n+2})+1+2s\cdot 3^{n+1}+(3c_1-c_0+s)\cdot 3^{n+2}\pmod {3^{n+3}},\\
&\equiv 1 + s\cdot 3^{n+1}+2s\cdot 3^{n+2}\pmod {3^{n+3}},\\
a_{8s\cdot 3^n+2} &= a_{2(8s\cdot 3^{n-1})+3+(8s\cdot 3^{n-1}-1)}\\
&= a_{2(8s\cdot 3^{n-1})+2}a_{8s\cdot 3^{n-1}+1}+a_{2(8s\cdot 3^{n-1})}a_{8s\cdot 3^{n-1}}+a_{2(8s\cdot 3^{n-1})+1}a_{8s\cdot 3^{n-1}-1}\\
&\equiv 1+s\cdot 3^n+(c_1+2c_2+2s)\cdot 3^{n+2}-s\cdot 3^n+(c_2-c_1)\cdot 3^{n+2}\pmod {3^{n+3}},\\
&\equiv 1 +2s\cdot 3^{n+2}\pmod {3^{n+3}}.
\end{aligned}\]
Hence, by induction, the congruences \eqref{eq:eq1} hold for all integers $s\geq 1$ and $n\geq 1$.
\end{proof}

We are now in a position to prove \Cref{th:thm1} by working on each case separately. 

\begin{enumerate}[label=(\alph*)]
	\item Suppose that $n\equiv k\pmod 8$ with $k\in \{1,2,3,4,6\}$. We note that the sequence $\{a_n\pmod 3\}_{n\geq 0}$ is periodic with period $8$, so $a_n\equiv a_k\pmod 3$. By routine calculation, we have $a_k\not\equiv 0\pmod 3$ for all $k\in \{1,2,3,4,6\}$, so $\nu_3(a_n) = 0$.
	\item Suppose that $n\equiv k\pmod {24}$ with $k\in \{5,7,13,15\}$. We note that the sequence $\{a_n\pmod 9\}_{n\geq 0}$ is periodic with period $24$, so $a_n\equiv a_k\pmod 9$. By routine calculation, we have $a_5=3, a_7 = 6, a_{13}=60$ and $a_{15}=129$, all of which are divisible by $3$ but not by $9$. Thus, we have $\nu_3(a_n) = 1$.
	\item Suppose $n\equiv 8\pmod{24}$. Then $n = 8s\cdot 3^m+8$ for some integers $m, s\geq 1$ with $3\nmid s$. Using the recurrence formula for $a_n$ and \Cref{pro:prop1}, we deduce that $a_n\equiv 9\pmod {3^{m+3}}$. Thus, we get $\nu_3(a_n) = 2$. 
	\item Suppose $n\equiv 23\pmod{24}$. Then $n = 8s\cdot 3^m-1$ for some integers $m, s\geq 1$ with $3\nmid s$, so that $\nu_3(n+1) = m$. Using the recurrence formula for $a_n$ and \Cref{pro:prop1}, we deduce that $a_n\equiv -s\cdot 3^{m+1}\pmod {3^{m+3}}$. Thus, we get $\nu_3(a_n) = m+1 = \nu_3(n+1)+1$.
	\item Suppose $n\equiv 21\pmod{24}$. Then $n = 8s\cdot 3^m-3$ for some integers $m, s\geq 1$ with $3\nmid s$, so that $\nu_3(n+3) = m$. Using the recurrence formula for $a_n$ and \Cref{pro:prop1}, we deduce that $a_n\equiv (2s\cdot 3^{m+2}+s\cdot 3^{m+1})\pmod {3^{m+3}}$. Thus, we get $\nu_3(a_n) = m+1 = \nu_3(n+3)+1$.
	\item Suppose $n\equiv 0\pmod{24}$. Then $n = 8s\cdot 3^m$ for some integers $m, s\geq 1$ with $3\nmid s$, so that $\nu_3(n) = m$. By \Cref{pro:prop1}, we have $a_n\equiv 2s\cdot 3^{m+2}\pmod {3^{m+3}}$. Thus, we get $\nu_3(a_n) = m+2 = \nu_3(n)+2$.
	\item Suppose $n\equiv 16\pmod{24}$. Then $n = 8s\cdot 3^m-8$ for some integers $m, s\geq 1$ with $3\nmid s$, so that $\nu_3(n+8) = m$. Using the recurrence formula for $a_n$ and \Cref{pro:prop1}, we deduce that $a_n\equiv -2s\cdot 3^{m+2}\pmod {3^{m+3}}$. Thus, we get $\nu_3(a_n) = m+2 = \nu_3(n+8)+2$.
\end{enumerate}
This completes the proof of \Cref{th:thm1}.

\section{Proof of \Cref{th:thm2}}

We note that if $m\leq 5$, then the only solutions are the ones listed in \Cref{th:thm2}, so we now suppose that $m\geq 6$. Applying \Cref{th:thm1} and \Cref{le:lem1} with $p=3$, we obtain
\[\begin{aligned}
\dfrac{m}{2}-\left\lfloor\dfrac{\log m}{\log 3}\right\rfloor-1 &\leq \nu_3(a_n)\leq\nu_3(n(n+1)(n+3)(n+8))+6 \leq 4\nu_3(n+\delta)+6,
\end{aligned}\] 
for some $\delta\in \{0,1,3,8\}$. This implies that
\[3^{\lfloor m/8 - \lfloor (\log m)/(\log 3)\rfloor/4 - 7/4\rfloor}\leq 3^{\nu_3(n+\delta)}\leq n+\delta\leq n+8\]
and taking logarithms leads to 
\begin{align}
\label{eq:eq2}
\left\lfloor \dfrac{m}{8} - \dfrac{1}{4}\left\lfloor\dfrac{\log m}{\log 3}\right\rfloor -\dfrac{7}{4}\right\rfloor\leq \dfrac{\log(n+8)}{\log 3}.
\end{align}
From \Cref{le:lem2}, we have $\alpha^{n-3} \leq a_n = m! < (m/2)^m$, so that $n < 3+m\log(m/2)/\log \alpha$. Plugging this in \cref{eq:eq2} yields
\[\left\lfloor \dfrac{m}{8} - \dfrac{1}{4}\left\lfloor\dfrac{\log m}{\log 3}\right\rfloor -\dfrac{7}{4}\right\rfloor\leq \dfrac{\log(11+m\log(m/2)/\log \alpha)}{\log 3}.\]
Thus, we deduce that $m\leq 68$ and $n \leq 630$. A simple computational search using \textit{Mathematica} shows that there are no solutions in the range $m\in [6,68]$ and $n\in [1,630]$. This completes the proof of \Cref{th:thm2}.

\bibliography{narayanabib}
\bibliographystyle{amsplain}

%

\end{document}